\documentclass[12pt,reqno]{amsproc}
\usepackage{amsmath,graphicx,amssymb,amsthm}
\usepackage{rotating}

\usepackage[left]{lineno}
\usepackage{mdframed}


\setlength{\hoffset}{-0.9in}
\setlength{\textwidth}{6.75in}
\theoremstyle{plain}
\setlength{\hoffset}{-0.9in}
\setlength{\textwidth}{6.75in}


\def\B{\mathcal B}

\def\B{\mathcal B}

\def\H{\mathcal H}

\def\M{\mathcal M}

\def\P{\mathcal P}
\def\R{\mathcal R}

\def\amslatex{$\mathcal{A}\kern-.1667em\lower.5ex\hbox{$\mathcal{M}$}\kern-.125em\mathcal{S}$-\LaTeX}

\newtheorem{set}{set}[section]

\newtheorem{Lemma}[set]{Lemma}

\newtheorem{Theorem}[set]{Theorem}
\newcommand{\define}{\mathrel{\hbox{$\equiv$\hskip -.90em \lower .47ex \hbox{$\leftharpoondown$}}}}
\newcommand{\enifed}{\mathrel{\hbox{$\equiv$\hskip -.90em \lower .47ex \hbox{$\rightharpoondown$}}}}

\numberwithin{equation}{section}
\pagestyle{myheadings}

\makeatletter
\newcommand{\LeftEqNo}{\let\veqno\@@leqno}

\makeatother

\begin{document}

\

\vspace{-2cm}


\title{On irreducible operators in factor von Neumann algebras}

\author{Junsheng Fang}
\address{School of Mathematical Sciences, Dalian University of
Technology, Dalian, 116024, China}
\curraddr{College of Mathematics and Information Science, Hebei Normal University, Shijiazhuang, 050024, China}
\email{junshengfang@hotmail.com}
\thanks{Junsheng Fang was partly supported by NSFC(Grant No.11431011) and a start up funding from Hebei Normal University.}

\author{Rui Shi}
\address{School of Mathematical Sciences, Dalian University of
Technology, Dalian, 116024, China}
\email{ruishi@dlut.edu.cn, ruishi.math@gmail.com}
\thanks{Rui Shi was partly supported by NSFC(Grant No.11401071) and the Fundamental Research Funds for the Central Universities (Grant No.DUT18LK23).}

\author{Shilin Wen}
\address{School of Mathematical Sciences, Dalian University of
Technology, Dalian, 116024, China}
\email{shilinwen127@gmail.com}
\thanks{}


\subjclass[2010]{Primary 47C15}


\keywords{factor von Neumann alegbras, irreducible operators}

\begin{abstract}
Let $\M$ be a factor von Neumann algebra with separable predual and let $T\in \M$. We call $T$ an irreducible operator (relative to $\M$) if $W^*(T)$
is an irreducible subfactor of $\M$, i.e., $W^*(T)'\cap \M={\mathbb C} I$. In this note, we show that the set of irreducible operators in $\M$ is a dense $G_\delta$ subset of
$\M$ in the operator norm. This is a natural generalization of a theorem of Halmos.
\end{abstract}

\maketitle

\section{Introduction}
In~\cite{Hal}, Halmos proved the following theorem. Let $\H$ be a separable (finite or infinite-dimensional) complex Hilbert space. Then the set of irreducible operators on $\H$ is a dense $G_\delta$ subset of $\B(\H)$ in the operator norm. Recall that an operator $T\in \B(\H)$ is irreducible if $T$ has no nontrivial reducing subspaces, i.e., if $P$ is a projection such that $PT=TP$ then $P=0$ or $P=I$. We refer to~\cite{Rad} for a beautiful short proof. In this note, we generalize the above theorem to arbitrary factor von Neumann algebras with separable predual. Let $\M$ be a factor von Neumann algebra with separable predual and let $T\in \M$. We call $T$ an irreducible operator (relative to $\M$) if $W^*(T)$
is an irreducible subfactor of $\M$, i.e., $W^*(T)'\cap \M={\mathbb C} I$. We show that the set of irreducible operators in $\M$ is a dense $G_\delta$ subset of
$\M$ in the operator norm.

Let $A \in \mathcal{B}(\mathcal{H})$ and $B \in \mathcal{B}(\mathcal{K})$, where $\mathcal{H}, \mathcal{K}$ are  Hilbert spaces. For every operator $X \in \mathcal{B}(\mathcal{K}, \mathcal{H})$, we define an operator $$\tau_{A,B}(X)= AX - XB.$$
	$\tau_{A,B}$ is called a Rosenblum operator~\cite{Ros,Rad2}.

\begin{Lemma}[Corollary 0.13 of~\cite{Rad2}]\label{Rosenblum}
If $\sigma(A)\cap \sigma(B)=\emptyset$, then $AX=XB$ implies $X=0$.
\end{Lemma}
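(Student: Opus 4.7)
The plan is to prove the lemma by a holomorphic functional calculus argument. Since $\sigma(A)$ and $\sigma(B)$ are disjoint compact subsets of $\mathbb{C}$, one can choose a finite union $\Gamma$ of smooth closed contours lying inside $\mathbb{C}\setminus(\sigma(A)\cup\sigma(B))$ whose winding number equals $1$ around every point of $\sigma(A)$ and $0$ around every point of $\sigma(B)$.

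First, starting from $AX=XB$, for every $z$ in the resolvent sets of both $A$ and $B$, I would rewrite the identity as $(zI-A)X = X(zI-B)$ and multiply on the left by $(zI-A)^{-1}$ and on the right by $(zI-B)^{-1}$, obtaining the key commutation
$$(zI-A)^{-1}X = X(zI-B)^{-1}.$$
Both sides are continuous operator-valued functions of $z$ along the compact set $\Gamma$, so the following contour integrals converge in the operator norm.

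Next, I would apply $\frac{1}{2\pi i}\oint_\Gamma (\cdot)\, dz$ to both sides. By the Riesz functional calculus, the left-hand side evaluates to
$$\Bigl(\tfrac{1}{2\pi i}\oint_\Gamma (zI-A)^{-1}\,dz\Bigr)X = I\cdot X = X,$$
because $\Gamma$ encloses all of $\sigma(A)$ with winding number $1$. The right-hand side evaluates to
$$X\Bigl(\tfrac{1}{2\pi i}\oint_\Gamma (zI-B)^{-1}\,dz\Bigr) = X\cdot 0 = 0,$$
because $\Gamma$ is null-homologous in the resolvent set of $B$. Comparing the two expressions forces $X=0$.

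I do not anticipate any serious obstacle. The two points that demand care are the construction of the separating contour, which is a standard consequence of the disjointness and compactness of $\sigma(A)$ and $\sigma(B)$ in $\mathbb{C}$, and the norm-convergence of the operator-valued contour integrals, which follows from continuity of the resolvents on the compact set $\Gamma$. Together these reduce the lemma to two invocations of the Riesz functional calculus applied to the resolvent identity derived from $AX=XB$.
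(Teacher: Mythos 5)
Your argument is correct: the identity $(zI-A)^{-1}X=X(zI-B)^{-1}$ on the common resolvent set, integrated over a cycle that winds once around $\sigma(A)$ and is null-homologous with respect to $\sigma(B)$, yields $X=I\cdot X=X\cdot 0=0$. The paper gives no proof of this lemma, citing it as Corollary 0.13 of Radjavi--Rosenthal, and your contour-integral argument is essentially the standard proof found in that reference, so there is nothing to reconcile.
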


Another ingredient in the proof of our main result is related to the generator problem of factor von Neumann algebras with separable predual. Precisely, we need the following lemma.
\begin{Lemma}\label{Popa}
Let $\mathcal{M}$ be a factor von Neumann algebra with separable predual. Then there exists a singly generated irreducible subfactor $\mathcal{N}$ in $\mathcal{M}$.
\end{Lemma}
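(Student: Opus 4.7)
The goal is to produce a single element $T\in\M$ with $W^*(T)'\cap\M=\mathbb{C}I$. My plan has two components: (a) locate an irreducible hyperfinite subfactor $\N\subset\M$, i.e., one with $\N'\cap\M=\mathbb{C}I$; and (b) exhibit a single generator of $\N$. Both ingredients are classical but nontrivial.

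For component (a) I would argue by type. If $\M$ is a type $\mathrm{I}$ factor, take $\N=\M=\B(\H)$ itself: by Halmos' theorem $\B(\H)$ has an irreducible operator and $\M$ is trivially irreducible in itself. If $\M$ is of type $\mathrm{II}_1$, I would invoke Popa's theorem that every $\mathrm{II}_1$ factor with separable predual contains a hyperfinite subfactor $\R$ with trivial relative commutant. If $\M$ is of type $\mathrm{II}_\infty$, decompose $\M\cong\M_1\tensor\B(\ell^2)$ with $\M_1$ of type $\mathrm{II}_1$, apply Popa's theorem inside $\M_1$ to obtain an irreducible hyperfinite $\R\subset\M_1$, and set $\N=\R\tensor\B(\ell^2)$; a routine tensor product computation gives $\N'\cap\M=(\R'\cap\M_1)\tensor\mathbb{C}I=\mathbb{C}I$. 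If $\M$ is of type $\mathrm{III}$, I would appeal to the continuous (Takesaki) decomposition $\M=\tilde\M\rtimes_\theta\mathbb{R}$ with $\tilde\M$ of type $\mathrm{II}_\infty$, or cite an extension of Popa's result to type $\mathrm{III}$ factors.

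For component (b), every hyperfinite factor with separable predual is singly generated. In the hyperfinite $\mathrm{II}_1$ case this is classical---one can construct a single generator explicitly from a UHF presentation $\overline{\bigcup_n M_{2^n}(\mathbb{C})}$. In the properly infinite cases (types $\mathrm{I}_\infty$, $\mathrm{II}_\infty$, and $\mathrm{III}$) it follows from Wogen's theorem that every properly infinite von Neumann algebra on a separable Hilbert space is singly generated. Choosing $T$ to be such a single generator of $\N$ gives $W^*(T)=\N$ with $W^*(T)'\cap\M=\N'\cap\M=\mathbb{C}I$, finishing the proof.

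The principal obstacle is component (a): Popa's existence theorem for irreducible hyperfinite subfactors of an arbitrary separable $\mathrm{II}_1$ factor is the deep ingredient, whose proof uses orthogonal MASAs and a delicate diagonalization. Extending this to non-type-$\mathrm{II}_1$ factors---especially to type $\mathrm{III}$, where a clean tensor-product reduction analogous to the $\mathrm{II}_\infty$ case is unavailable---is the other serious difficulty, and I would expect to consult Popa's subsequent papers to handle it in full generality.
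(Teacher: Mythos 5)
Your type $\mathrm{II}_1$ case matches the paper exactly (Popa's irreducible hyperfinite subfactor, which is singly generated), and your type $\mathrm{II}_\infty$ argument, while correct, is more elaborate than necessary. The genuine gap is the type $\mathrm{III}$ case, which you leave unresolved: the continuous decomposition does not obviously transport an irreducible subfactor of the type $\mathrm{II}_\infty$ core to an irreducible subfactor of the crossed product (relative commutants in crossed products are delicate), and ``cite an extension of Popa's result'' is not an argument. The difficulty you flag there is illusory, and the way out is already in your own component (b): nothing in the statement requires $\N$ to be a \emph{proper} subfactor. Since $\M$ is a factor, $\M'\cap\M=\mathbb{C}I$, so $\M$ is an irreducible subfactor of itself; hence for every type except $\mathrm{II}_1$ it suffices to know that $\M$ itself is singly generated. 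That is exactly Wogen's theorem on properly infinite von Neumann algebras with separable predual (plus the trivial type $\mathrm{I}_n$ case), which you invoke anyway to generate $\N$. This is the paper's proof: take $\N=\M$ for types $\mathrm{I}$, $\mathrm{II}_\infty$, and $\mathrm{III}$, and reserve Popa's theorem for type $\mathrm{II}_1$, the only type for which $\M$ might fail to be singly generated. Your tensor-product detour $\N=\R\tensor\B(\ell^2)$ in the $\mathrm{II}_\infty$ case is thereby also unnecessary, though it does no harm.
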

\begin{proof}
It is well-known that if $\M$ is type ${\rm I}$, ${\rm II}_{\infty}$ or ${\rm III}$, then $\M$ is singly generated. So the lemma is clear in these cases. When $\M$ is a type ${\rm II}_1$ factor with separable predual, by~\cite{Pop}, there exists a hyperfinite irreducible subfactor in $\M$ which is singly generated.
\end{proof}

\section{Main result}
\begin{Theorem}
Let $\M$ be a factor von Neumann algebra with separable predual. Then the set of irreducible operators in $\M$ is a dense $G_\delta$ subset of
$\M$ in the operator norm.
\end{Theorem}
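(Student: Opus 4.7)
The plan is to show separately that the set of irreducible operators is a $G_\delta$ in norm and that it is norm-dense in $\M$.

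\emph{$G_\delta$ part.} Fix a faithful normal state $\phi$ on $\M$, available since $\M$ has separable predual. For each integer $n \geq 1$ define
$$\mathcal{R}_n = \bigl\{T \in \M : \exists \text{ projection } P \in \M,\ PT = TP,\ \tfrac{1}{n} \leq \phi(P) \leq 1 - \tfrac{1}{n}\bigr\}.$$
I would show each $\mathcal{R}_n$ is norm-closed: if $T_k \to T$ in norm with witnesses $P_k$, use SOT-metrizability and SOT-compactness of the unit ball of $\M$ to extract an SOT-cluster point $P$ of $\{P_k\}$. Projections being SOT-closed, $P$ is a projection; $\phi$ being normal (hence SOT-continuous on bounded sets), $\phi(P) \in [1/n, 1-1/n]$; and the decomposition $(P_k T_k - PT)\xi = P_k(T_k - T)\xi + (P_k - P)T\xi$ shows $P_k T_k \to PT$ in SOT, similarly $T_k P_k \to TP$, so $PT = TP$. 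Faithfulness of $\phi$ gives that a projection $P \in \M$ is nontrivial iff $\phi(P) \in (0,1)$, so the reducible operators are exactly $\bigcup_n \mathcal{R}_n$, an $F_\sigma$; hence the irreducibles form a $G_\delta$.

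\emph{Density part.} Fix an irreducible singly generated subfactor $\N = W^*(S) \subseteq \M$ from Lemma \ref{Popa}, and a projection $E \in \N$ with $E \sim_\N I - E$ via a partial isometry $V \in \N$. (The exceptional case $\M = M_n(\mathbb{C})$ with $n$ odd, where no such $E$ exists, is handled separately via the classical fact that matrices with trivial commutant are open and dense.) Identify $\M \cong M_2(\A)$ with $\A = E\M E$ via the matrix units $E, V^*, V, I-E$. Given $T = (T_{ij})$ and $\epsilon > 0$, construct
$$T' = \begin{pmatrix} A & B \\ C & D \end{pmatrix}$$
with each block within $\epsilon/4$ of the corresponding block of $T$, where $A, D \in \A$ are chosen irreducible in $\A$ with $\sigma(A) \cap \sigma(D) = \emptyset$, $C = T_{21}$, and $B = T_{12} + \delta \cdot 1_\A$ for a small nonzero $\delta$. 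For any self-adjoint projection $P = \begin{pmatrix} p_1 & x \\ x^* & p_2 \end{pmatrix}$ with $PT' = T'P$, the $(1,2)$- and $(2,1)$-block equations are Sylvester equations for $x$ and $x^*$; Lemma \ref{Rosenblum} together with $\sigma(A) \cap \sigma(D) = \emptyset$ makes them uniquely solvable in $x$ for each fixed $p_1, p_2$. Substituting into the diagonal-block equations and using irreducibility of $A, D$ in $\A$ forces $p_1, p_2 \in \{0, 1_\A\}$; the algebraic consistency imposed by the $\delta \cdot 1_\A$ summand in $B$ rules out the mixed cases $p_1 = 0, p_2 = 1_\A$ and vice versa for generic small $\delta$, leaving $P \in \{0, I\}$.

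\emph{Main obstacle.} The hardest step is to arrange $A$ and $D$ to be irreducible in $\A$ while staying within $\epsilon/4$ of $T_{11}, T_{22}$; since $\A = E\M E$ is a factor of the same type as $\M$, this looks like the same theorem applied to $\A$. My expected way out is to apply Lemma \ref{Popa} inside $\A$, obtaining an irreducible singly generated subfactor $\N_0 = W^*(S_0) \subseteq \A$, and then to construct $A$ as a small perturbation $T_{11} + \eta K$ with $K$ chosen so that $S_0 \in W^*(T_{11} + \eta K)$ for a dense set of small $\eta$ (via a Rosenblum-type spectral separation built from the $M_2(\mathbb{C})$-substructure of $\N_0$); a symmetric construction gives $D$, with a further small scalar shift ensuring $\sigma(A) \cap \sigma(D) = \emptyset$. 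Verifying that $S_0 \in W^*(A)$ and $S_0 \in W^*(D)$---the step that actually produces the irreducibility of the diagonal blocks without presuming the full density statement in $\A$---is where I expect the core technical work to lie.
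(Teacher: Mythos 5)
Your $G_\delta$ argument has a genuine gap: the closed unit ball of $\M$ is WOT-compact but \emph{not} SOT-compact (for instance, the powers of the unilateral shift in $\B(\H)$ converge to $0$ weakly but admit no SOT-convergent subnet, since they are isometries), so you cannot extract an SOT-cluster point of the witnesses $P_k$. If you fall back on WOT-compactness, the argument breaks at the next step, because a WOT-limit of projections need not be a projection (characteristic functions of finely distributed sets of measure $\tfrac12$ in $L^\infty[0,1]$ converge weakly to $\tfrac12 I$); worse, the cluster point could be a scalar $\lambda I$ with $\lambda\in[1/n,1-1/n]$, which satisfies your constraint $\phi(P)\in[1/n,1-1/n]$ yet yields no nontrivial reducing projection for $T$. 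This is precisely the pitfall Halmos's argument is designed to avoid, and the paper follows him: one works with the WOT-compact convex set $\P$ of \emph{all} positive contractions, removes the weakly closed set of scalars to write the remainder as a countable union of weakly compact sets $\P_n$ each disjoint from the scalars, and only then passes to limits. Your sets $\R_n$ would need to be re-founded on that idea before the closedness claim holds.

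The density half is also incomplete, and you identify the gap yourself: producing irreducible elements $A,D\in\A=E\M E$ within $\epsilon/4$ of the given corners $T_{11},T_{22}$ is essentially the density statement for the factor $\A$, so the $2\times2$ scheme is circular as it stands, and the proposed repair (arranging $S_0\in W^*(T_{11}+\eta K)$ for an irreducible subfactor $\N_0=W^*(S_0)$ of $\A$) is exactly the hard step and is not carried out. There is a secondary obstruction as well: you need $\sigma(A)\cap\sigma(D)=\emptyset$ while keeping $A$, $D$ close to $T_{11}$, $T_{22}$, which is problematic when, say, $T_{11}=T_{22}$. The paper sidesteps both problems by never asking for irreducible perturbations of the given diagonal blocks: it first replaces the real and imaginary parts of $T$ by finite spectral approximations, obtaining a decomposition $I=\sum_{i,j}F_{ij}$ in which the diagonal of $A_2$ consists of \emph{distinct scalars} $\lambda_{ij}F_{ij}$; it then adds $\delta X_{ij}$ and $\delta Y_{ij}$, where $\{X_{ij},Y_{ij}\}''$ is a single irreducible two-generator subfactor of the corner $F_{ij}\M F_{ij}$ supplied by Lemma~\ref{Popa}. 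Disjointness of the spectra $\sigma(\lambda_{ij}F_{ij}+\delta X_{ij})$ is then automatic for small $\delta$, Lemma~\ref{Rosenblum} forces any commuting projection to be block-diagonal, irreducibility of $\{X_{ij},Y_{ij}\}''$ makes each diagonal block $0$ or the identity of its corner, and the nonzero off-diagonal entries of $B_3$ tie all the blocks together. The only input needed from Lemma~\ref{Popa} is one irreducible subfactor per corner, not a dense family; restructuring your density argument along these lines removes the circularity.
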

\begin{proof}
Let $T\in \M$ and $\epsilon>0$. We need to show that there exists an operator $S\in \M$ such that $\|T-S\|<\epsilon$ and $S$ is irreducible relative to $\M$, i.e., if $P\in \M$ is a projection such that $PS=SP$, then $P=0$ or $P=I$.

Write $T=A+iB$, where both $A$ and $B$ in $\M$ are self-adjoint operators. By the spectral theorem for self-adjoint operators, there exist $\lambda_1<\lambda_2<\cdots<\lambda_n\in\sigma(A)$ and projections $E_1,E_2,\ldots, E_n\in \M$ such that $\sum_{j=1}^n E_j=I$ and $\|A-\sum_{j=1}^n\lambda_jE_j\|<\frac{1}{4}\epsilon$.  Let $A_1=\sum_{i=1}^n\lambda_iE_i$ and $B_{ij}=E_iBE_j$. By the spectral theorem for self-adjoint operators again, there exist $\eta_{i1},\eta_{i2},\ldots,\eta_{im_i}\in\sigma(B_{ii})$ and projections $F_{i1},F_{i2},\ldots, F_{im_i}\in \M$ such that $\sum_{j=1}^{m_i} F_{ij}=E_i$ and $\|B_{ii}-\sum_{j=1}^{m_i}\eta_{ij}F_{ij}\|<\frac{1}{4}\epsilon$. Let $B_{ii}'=\sum_{j=1}^{m_i}\eta_{ij}F_{ij}$.

Define $T_1=A_1+iB_1$, where $B_1$ is self-adjoint, defined in the form
\[  B_{1}=
  \bordermatrix	
  {&E_1&E_2&\cdots&E_n \cr
  	E_1 & B'_{11} & B_{12} & \cdots & B_{1n} \cr
  	E_2 & B_{21}& B'_{22} & \cdots & B_{2n} \cr
  	\vdots & 	\vdots & \vdots & \ddots & \vdots\cr
  	E_n & B_{n1} & B_{n2} & \cdots & B'_{nn} \cr}.
  \]
Then $T_1$ can be expressed in the form
\[ T_{1} = A_{1} + i B_1 =
	\bordermatrix	
	{&E_1&E_2&\cdots&E_n \cr
		E_1 & \lambda_1 & 0 & \cdots & 0 \cr
		E_2 & 0 & \lambda_2 & \cdots & 0 \cr
		\vdots & 	\vdots & \vdots & \ddots & \vdots\cr
		E_n & 0 & 0 & 	\cdots & \lambda_n \cr}
	+
	\bordermatrix	
	{&E_1&E_2&\cdots&E_n \cr
		E_1 & B_{11}' & B_{12} & \cdots & B_{1n} \cr
		E_2 & B_{21}& B_{22}' & \cdots & B_{2n} \cr
		\vdots & 	\vdots & \vdots & \ddots & \vdots\cr
		E_n & B_{n1} & B_{n2} & \cdots & B_{nn}' \cr}.
	\]
Note that

\begin{align}\label{T_1}
\|T-T_1\|=\|(A+iB)-(A_1+iB_1)\|\leq \|A-A_1\|+\|B-B_1\|<\frac{1}{2}\epsilon.
\end{align}

For $1\leq i\leq n$, we can choose real numbers $\lambda_{i1},\lambda_{i2},\ldots,\lambda_{im_{i}}$ such that
\begin{enumerate}
	\item the inequality $\|\lambda_iE_i-\sum_{j=1}^{m_i}\lambda_{ij}F_{ij}\|<\frac{1}{8}\epsilon$ holds for every $i$;
	\item $\lambda_{11}<\lambda_{12}<\cdots<\lambda_{1m_1}<\lambda_{21}<\cdots<\lambda_{2m_2}<\cdots<\lambda_{n1}<\cdots<\lambda_{nm_n}$.
\end{enumerate}

Define $A_2=\sum_{i=1}^n\sum_{j=1}^{m_i}\lambda_{ij}F_{ij}$. Then $\|A_1-A_2\|<\frac{1}{8}\epsilon$.  Now we make a small self-adjoint perturbation $B_2$ of $B_1$ such that each off-diagonal entry of $B_2$, with respect to the decomposition
\[
I=\sum_{i=1}^n\sum_{j=1}^{m_i}F_{ij},
\]
is nonzero. That is we can construct a self-adjoint operator $B_2$ in $\M$ such that $\|B_2-B_1\|<\frac{1}{8}\epsilon$ and $F_{ij}B_2F_{i'j'}\neq 0$ if $i\neq i'$ or $j\neq j'$. Let $T_2$ be defined in the form $T_2=A_2+iB_2$, for
{\begin{equation}
A_{2}=
	\bordermatrix	
	{&F_{11}&F_{12}&\cdots&F_{nm_n} \cr
		F_{11} & \lambda_{11} & 0 & \cdots & 0 \cr
		F_{12} & 0 & \lambda_{12} & \cdots & 0 \cr
		\vdots & 	\vdots & \vdots & \ddots & \vdots\cr
		F_{nm_n} & 0 & 0 & 	\cdots & \lambda_{nm_n} \cr}\quad
	\mbox{and}\quad
	B_{2}=\bordermatrix	
	{&F_{11}&F_{12}&\cdots&F_{nm_n} \cr
		F_{11} & \eta_{11} & \ast & \cdots & \ast \cr
		F_{12} & \ast & \eta_{12} & \cdots & \ast \cr
		\vdots & 	\vdots & \vdots & \ddots & \vdots\cr
		F_{nm_n} & \ast & \ast & \cdots & \eta_{nm_n} \cr},\nonumber
\end{equation}}
where each $\ast$-entry is nonzero. By applying (\ref{T_1}), it follows that
\begin{equation}\label{T_2}
	\|T-T_2\|\leq \|T-T_1\|+\|T_1-T_2\|<\frac{3}{4}\epsilon.
\end{equation}

Since $\M$ is a separable factor, $F_{ij}\M F_{ij}$ is also a separable factor. By Lemma \ref{Popa}, we can find positive elements $X_{ij},Y_{ij}\in F_{ij}\M F_{ij}$ such that $\{X_{ij},Y_{ij}\}''$ is an irreducible subfactor of $F_{ij}\M F_{ij}$. Now we can choose $\delta>0$ sufficiently small such that the spectra of $\lambda_{ij}F_{ij}+\delta X_{ij}$ are pairwise disjoint, for $1\leq i\leq n$ and $1\leq j\leq m_i$.

Let $T_3$ be defined in the form $T_3=A_3+i B_3$, for
\begin{equation}
	A_3=\bordermatrix	
	{&F_{11}&F_{12}&\cdots&F_{nm_n} \cr
		F_{11} & \lambda_{11}+\delta X_{11} & 0 & \cdots & 0 \cr
		F_{12} & 0 & \lambda_{12}+\delta X_{12} & \cdots & 0 \cr
		\vdots & 	\vdots & \vdots & \ddots & \vdots\cr
		F_{nm_n} & 0 & 0 & 	\cdots & \lambda_{nm_n}+\delta X_{nm_n} \cr}
	\nonumber\
\end{equation} and
\begin{equation}
		B_3=\bordermatrix	
	{&F_{11}&F_{12}&\cdots&F_{nm_n} \cr
		F_{11} & \eta_{11}+\delta Y_{11} & \ast & \cdots & \ast \cr
		F_{12} & \ast & \eta_{12}+\delta Y_{12} & \cdots & \ast \cr
		\vdots & 	\vdots & \vdots & \ddots & \vdots\cr
		F_{nm_n} & \ast & \ast & \cdots & \eta_{nm_n}+\delta Y_{nm_n} \cr},\nonumber
	\end{equation}
where each $\ast$-entry is the same as in $T_2$.
Then, clearly, if $\delta>0$ is small enough, we have
\begin{equation}\label{T_3}
	\|T_2-T_3\|<\frac{1}{4}\epsilon.
\end{equation}
Hence, the inequalities (\ref{T_2}) and (\ref{T_3}) entail that $\|T-T_3\|<\epsilon$.

We assert that $T_3$ is irreducible relative to $\M$. Let $P\in \M$ be a projection commuting with $T_3$. Then $PA_3=A_3P$ and $PB_3=B_3P$. Write $P=(P_{ab})_{1\leq a,b\leq k}$ with respect to the decomposition $I=\sum_{i=1}^n\sum_{j=1}^{m_i}F_{ij}$, where $k=\sum_{i=1}^nm_{i}$. That $PA_3=A_3P$ implies that
\begin{equation}
	(\lambda_{11}F_{11}+\delta X_{11})P_{12}=P_{12}(\lambda_{12}F_{12}+\delta X_{12}).\nonumber
\end{equation}
Since $\sigma(\lambda_{11}F_{11}+\delta X_{11})\cap \sigma (\lambda_{12}F_{12}+\delta X_{12})=\emptyset$, we have $P_{12}=0$ by Lemma \ref{Rosenblum}. Similarly, we have $P_{ab}=0$ for $1\leq a\neq b\leq k$. Thus $P=\sum_{a=1}^k P_{aa}$ is diagonal with respect to the decomposition $I=\sum_{i=1}^n\sum_{j=1}^{m_i}F_{ij}$.

By the construction that $\{X_{ij}, Y_{ij}\}''$ is an irreducible subfactor of $F_{ij}\M F_{ij}$, it follows that $P_{aa}$ is either {{0}} or $I_{F_{ij}\M F_{ij}}$ for each $a$. Since the off-diagonal entries of $B_3$ are nonzero, an easy calculation shows that, if $P_{11}=0$, then $P_{aa}=0$ for $1\leq a\leq k$. Therefore, $P=0$ or $P=I$. This proves that $T_3$ is irreducible relative to $\M$.

The remainder is to prove the set of irreducible operators relative to $\M$ is a $G_\delta$ subset of $\M$ in the operator norm. The proof is similar to that provided by Halmos. For the sake of completeness, we include the details. Let $\P$  be the set of all those selfadjoint operators $P$ in $\M$ for which $0\leq P\leq I$. Let $\P_0$ be the subset of those elements of $\P$ that are \emph{not} scalar multiples of the identity. Since $\P$ is a weakly closed subset of the unit ball of $\M$, it is weakly compact, and hence the weak topology for $\P$ is metrizable. Since the set of scalars is weakly closed, it follows that $\P_0$ is weakly locally compact. Since the weak topology for $\P$ has a countable base, the same is true for $\P_0$, and therefore $\P_0$ is weakly $\sigma$-compact. Let $\P_1,\P_2,\ldots$ be weakly compact subsets of $\P_0$ such that $\cup_{n=1}^\infty \P_n=\P_0$.

It is to be proved that the set of reducible operators relative to $\M$, denoted by $\R(\M)$, is an $F_\sigma$ set in the operator norm topology. Let $\hat{\P}_n$ be the set of all those operators $T$ in $\M$ for which there exists a $P\in \P_n$ such that $TP=PT$. Then $\cup_{n=1}^\infty\hat{\P}_n=\R(\M)$.

The proof can be completed by showing that each $\hat{\P}_n$ is closed in the operator norm. Suppose that $T_k\in \hat{\P}_n$ and $\lim_{k\rightarrow \infty}T_k=T$ in the operator norm. For each $k$, find a $P_k\in \P_n$ such that $T_kP_k=P_kT_k$. Since $\P_n$ is weakly compact and metrizable, we may assume that $P_k$ is weakly convergent to $P$ in $\P_n$. Then $\lim_{k\rightarrow\infty}T_kP_k= TP$ and $\lim_{k\rightarrow\infty}P_kT_k=PT$ in the weak operator topology. Hence, $TP=PT$ and $T\in \hat{\P}_n$. This implies that $\hat{P}_n$ is closed in the operator norm and $\R(\M)$ is an $F_\sigma$ set in the operator norm.
\end{proof}

\end{document}